\documentclass[a4paper,11pt,twoside,english]{article}
\usepackage[T1]{fontenc}
\usepackage[utf8]{inputenc}
\usepackage{lmodern}
\usepackage[a-1b]{pdfx}
\usepackage{hyperref}
\usepackage{cite}
\usepackage{listings}
\usepackage{color}
\usepackage{url}
\usepackage{multirow}
\usepackage{amsmath,amsthm,amssymb,amsfonts}
\usepackage{verbatim}
\usepackage{graphicx}
\usepackage{mathtools}
\usepackage{xcolor}
\usepackage{geometry}
\usepackage{enumitem}
\usepackage{ulem}
\usepackage{comment}

\newtheorem{theorem}{Theorem}[section]
\newtheorem{proposition}[theorem]{Proposition}
\newtheorem{lemma}[theorem]{Lemma}
\newtheorem{corollary}[theorem]{Corollary}

\theoremstyle{definition}

\newtheorem{remark}[theorem]{Remark}

\hypersetup{%
    pdfpagemode={UseOutlines},
    bookmarksopen,
    pdfstartview={FitH},
    colorlinks,
    linkcolor={blue},
    citecolor={blue},
    urlcolor={blue}
}

\def\e{\varepsilon}

\title{Homogenization effects on non-local functionals}
\author{
    Enrico Micalizio\thanks{\textbf{Correspondence:} Email: enrico.micalizio@studenti.polito.it} \\[1ex]
    \small Politecnico di Torino, Dipartimento di Scienze Matematiche (DISMA) \\
    \small Corso Duca degli Abruzzi 24, 10129 Torino, Italy
}
\date{}

\begin{document}
\maketitle
\begin{abstract}
  We study the homogenization of a class of non-local functionals featuring a rapidly oscillating periodic weight. By means of two-scale convergence, we explicitly evaluate the $\Gamma$-limit for constant target functions, revealing how the interplay between periodicity and non-locality forces the minimizing sequences to develop highly oscillating microstructures. As a natural consequence, we establish that the effective macroscopic functional fails to admit a standard double-integral representation. 
\end{abstract}
\vspace{0.5cm}
\noindent \textbf{Keywords:} homogenization; non-local functionals; $\Gamma$-convergence; two-scale convergence; optimal microstructures; integral representation \\
\noindent \textbf{Mathematics Subject Classification:} 49J45, 35B27.

\section{Introduction}
The macroscopic behavior of materials and physical systems is often linked to energy minimization. 
However, as pointed out in \cite{Braides_causin, Daneri_2018, Giuliani_2006}, competing interactions at the microscopic level prevent the formation of homogeneous ground states and instead induce the formation of patterns and microstructures, leading to non-trivial and often unexpected macroscopic properties. 
This complex phenomenology naturally leads to the study of long-range interactions, whose features are effectively captured by the mathematical framework of non-local functionals. Furthermore, this class of functionals has recently attracted significant attention within the Calculus of Variations. Indeed, beyond the aforementioned long-range interactions, non-local energies -- typically formulated as double integrals -- naturally arise in models involving peridynamics \cite{Emmrich_2015, Silling2000} and image processing \cite{brezis2016, Gilboa}. In this context, recent works by Braides and Dal Maso have investigated some fundamental properties of functionals taking the form
\[
F_k(u) = \int_{\Omega \times \Omega} f_k(u(x) - u(y))\,\mathrm{d}\mu_k(x,y) + \int_\Omega g_k(x,\nabla u(x))\,\mathrm{d}x,
\]
defined on $W^{1,p}_0(\Omega)$ with $p>1$, where $\Omega$ is a bounded open set in $\mathbb{R}^d$ for $d\geq 1$.  In \cite{braidesdm2022}, the authors provide suitable conditions on the measures $\mu_k$ guaranteeing the stability of the $\Gamma$-limit, while in \cite{braides2022} it is shown that the $\Gamma$-limit retains a decoupled integral form, highlighting how the effective local part arises from the interaction between local and non-local terms. The integral representability of the limit, however, is a delicate issue: while positive results can be achieved for $p=2$ \cite{Braides23}, non-representability counterexamples emerge for $p \neq 2$ \cite{Braides23} or when the local term is dropped \cite{Braides24}.

A natural further step in this investigation is to understand how these non-local energies behave when the interaction itself oscillates at a microscopic scale $\varepsilon>0$.
When dealing with materials exhibiting a heterogeneous yet periodic microstructure, homogenization provides a powerful tool to derive the effective macroscopic behavior. We refer to \cite{bertazzoni2025} for an in-depth analysis via Young measures, and to \cite{BraidesPiatnitski, Braides_2021} for the homogenization of convolution-type energies.

Motivated by these results, in this paper we study the asymptotic behavior of the sequence of non-local functionals $F_\varepsilon \colon L^1(\Omega) \to \mathbb{R} \cup \{+\infty\}$ defined by
\begin{equation}\label{Feu}
        F_\varepsilon(u) = \int_{\Omega \times \Omega} a\left(\frac{x-y}{\varepsilon}\right) f(u(x)-u(y)) \, \mathrm{d}x \mathrm{d}y.
\end{equation}
Here, $f$ is the same triple-well potential considered in \cite{Braides24}, given by
\begin{equation}\label{potential}
    f(z) = \begin{cases} 
      0 & \text{if } z \in \{-1, 1\}, \\ 
      1 & \text{if } z = 0, \\
      +\infty & \text{otherwise,}
   \end{cases}
\end{equation}
while the periodic weight $a \in L^\infty(\mathbb{R})$ is assumed to be strictly positive. To tackle this problem, we apply the theory of two-scale convergence introduced by Nguetseng and Allaire \cite{nguetseng, allaire}, which allows us to effectively capture the microscopic oscillations during the limit process. By explicitly characterizing the $\Gamma$-limit for constant target functions, which identify the ground state energy of the system, we prove that energy minimization strictly enforces the emergence of highly oscillating step-like microstructures. 

The paper is organized as follows. In Section \ref{sec2}, after reformulating the problem using characteristic functions, we compute the two-scale limit of the functionals, highlighting the structural properties of the minimizing sequences. In Section \ref{sec3}, we solve the cell problem, showing how the minimizing profiles strongly depend on the structure of the kernel $a$. While proving that the minimum of the energy is attained by constant functions, we illustrate how interactions associated with the zero-cost wells of the potential $f$ are energetically favored. Finally, in Section \ref{sec4} we establish that the $\Gamma$-limit of the sequence of functionals defined in \eqref{Feu} does not admit a standard non-local double integral representation.

\section{Two-scale Limit and Homogenization}\label{sec2}
 Without loss of generality, we can choose $\Omega = (0,1)$ for simplicity.

\begin{lemma}\label{lemma_structure}
Let $u \in L^1(\Omega)$ and let $\{u_\varepsilon\}$ be a sequence weakly converging to $u$ in $L^1(\Omega)$ such that $\sup_\varepsilon F_\varepsilon(u_\varepsilon) < +\infty$. Then, there exists a bounded sequence $\{z_\varepsilon\} \subset \mathbb{R}$ such that, up to subsequences, $z_\varepsilon \to z \in \mathbb{R}$ and
\[
u_\varepsilon(x) \in \{z_\varepsilon, z_\varepsilon + 1\} \quad \text{for almost every } x \in \Omega.
\]
Furthermore, the limit $z$ satisfies
\begin{equation}\label{esssup}
z \leq \operatorname{ess-inf}u \quad \text{ and } \quad \operatorname{ess-sup}u \leq z+1.
\end{equation}
\end{lemma}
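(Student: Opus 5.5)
Since $\sup_\varepsilon F_\varepsilon(u_\varepsilon)<+\infty$, each $F_\varepsilon(u_\varepsilon)$ is finite. Because $a$ is strictly positive (essentially bounded below on its period, which we assume, or at least $a>0$ a.e.), finiteness forces $f(u_\varepsilon(x)-u_\varepsilon(y))<+\infty$ for a.e.\ $(x,y)\in\Omega\times\Omega$. In other words,
\[
u_\varepsilon(x)-u_\varepsilon(y)\in\{-1,0,1\}\quad\text{for a.e. }(x,y)\in\Omega\times\Omega.
\]
The plan is to turn this pairwise constraint into the two-value structure, and then pass to the limit using weak convergence.

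\textbf{Step 1: the two-value structure.} By Fubini, we can pick $y_0$ such that $u_\varepsilon(x)-u_\varepsilon(y_0)\in\{-1,0,1\}$ for a.e.\ $x$. Then $u_\varepsilon$ takes a.e.\ values in $\{c-1,c,c+1\}$ with $c=u_\varepsilon(y_0)$. The values $c-1$ and $c+1$ cannot both be taken on sets of positive measure. If they were, the product of these two sets would have positive measure in $\Omega\times\Omega$, and on it the difference equals $\pm2$, which is excluded. Hence $u_\varepsilon$ takes values in $\{z_\varepsilon,z_\varepsilon+1\}$, with $z_\varepsilon\in\{c-1,c\}$. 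To make the choice canonical we set $z_\varepsilon=\operatorname{ess-inf}u_\varepsilon$. If $u_\varepsilon$ is a.e.\ constant, $z_\varepsilon$ is that constant (one could also choose $z_\varepsilon$ to be the constant minus one; this freedom is harmless).

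\textbf{Step 2: boundedness of $z_\varepsilon$.} A weakly convergent sequence in $L^1$ is bounded in $L^1$, so $\|u_\varepsilon\|_{L^1}\le C$. Since $|u_\varepsilon|\ge |z_\varepsilon|-1$ a.e.\ and $|\Omega|=1$, we get $|z_\varepsilon|\le C+1$. By Bolzano–Weierstrass, a subsequence satisfies $z_\varepsilon\to z$.

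\textbf{Step 3: the bounds \eqref{esssup}.} We have $z_\varepsilon\le u_\varepsilon\le z_\varepsilon+1$ a.e. Fix $\delta>0$. For $\varepsilon$ small along the subsequence, $z-\delta\le u_\varepsilon\le z+1+\delta$ a.e. The set $\{v:\ z-\delta\le v\le z+1+\delta \text{ a.e.}\}$ is convex and strongly closed in $L^1$, hence weakly closed. Therefore $u$ satisfies the same bounds, and letting $\delta\to0$ gives \eqref{esssup}. Equivalently, one can test the weak convergence against $\chi_E$ for $E=\{u<z-\delta\}$, which gives $\int_E u\ge (z-\delta)|E|$ and forces $|E|=0$.

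\textbf{Main obstacle.} The only delicate point is Step 1: the a.e.\ constraint must be handled with the Fubini/positive-measure-product argument, rather than pointwise. We also need $a>0$ a.e., so that a set of positive measure where $f=+\infty$ really produces infinite energy.
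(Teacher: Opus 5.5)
Your proof is correct and follows essentially the same route as the paper: finiteness of the energy together with $a>0$ forces increments in $\{-1,0,1\}$, a Fubini section yields three possible values, a product set of positive measure excludes taking both $c\pm1$, weak convergence bounds $z_\varepsilon$, and the inequalities on $z$ follow from testing the weak convergence against indicator functions (your Mazur/closed-convex-set variant is equivalent). You also supply details the paper leaves implicit, namely the explicit estimate $|z_\varepsilon|\le C+1$ from $L^1$-boundedness and the relabeling $z_\varepsilon\in\{c-1,c\}$.
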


\begin{proof}
The strict positivity of $a$ implies that the only way to achieve $F_\varepsilon(u_\varepsilon) < +\infty$ is for $f(u_\varepsilon(x) - u_\varepsilon(y))$ to take values in $\{0,1\}$ for almost every $(x,y) \in \Omega \times \Omega$. Therefore, fixing a suitable $y \in \Omega$, there exists $z_\varepsilon$ such that $u_\varepsilon(x) \in \{z_\varepsilon, z_\varepsilon - 1, z_\varepsilon + 1\}$ for almost every $x \in \Omega$. If both values $z_\varepsilon -1$ and $z_\varepsilon +1$ were taken on sets of positive measure, there we would have $F_\varepsilon(u_\varepsilon) = +\infty$; hence we can restrict the values to $u_\varepsilon(x) \in \{z_\varepsilon, z_\varepsilon + 1\}$. Since the sequence is weakly convergent, $\{z_\varepsilon\}$ is bounded, and we can assume $z_\varepsilon \to z$ up to subsequences. Given that $z_\varepsilon \leq u_\varepsilon \leq z_\varepsilon + 1$, integrating over any measurable subset $A \subset \Omega$ and passing to the limit yields
\[
|A|z \leq \int_A u(x) \, \mathrm{d}x \leq |A|(z + 1).
\]
Since this holds for every measurable subset $A$, inequality \eqref{esssup} follows almost everywhere, which in turn implies that $u \in L^\infty(\Omega)$.
\end{proof}

Following Lemma \ref{lemma_structure}, we define the characteristic functions
\begin{equation}\label{chie}
\chi_\varepsilon(x) \coloneqq
\begin{cases}
1 & \text{if } u_\varepsilon(x) = z_\varepsilon + 1 \\
0 & \text{if } u_\varepsilon(x) = z_\varepsilon,
\end{cases}
\end{equation}
so that 
\[
f(u_\varepsilon(x) - u_\varepsilon(y)) = \chi_\varepsilon(x)\chi_\varepsilon(y) + (1-\chi_\varepsilon(x))(1-\chi_\varepsilon(y))
\]
and the functional rewrites as
\begin{equation}\label{Feps}
F_\varepsilon(u_\varepsilon) = \int_{\Omega \times \Omega }a\left(\frac{x-y}{\varepsilon}\right) \Big[\chi_\varepsilon(x)\chi_\varepsilon(y) + (1-\chi_\varepsilon(x))(1-\chi_\varepsilon(y))\Big] \, \mathrm{d}x \mathrm{d}y.
\end{equation}
We pass to the limit by using a two-scale convergence approach. 

Let $Y \coloneqq (0,1)$ be the unit cell. Note that since $\chi_\varepsilon \in \{0,1\}$ almost everywhere and~$\Omega$ has finite measure, the sequence $\{\chi_\varepsilon\}$ is uniformly bounded not only in $L^\infty(\Omega)$ but also in $L^2(\Omega)$. This allows us to apply classical two-scale convergence theory. By \cite[Theorem~1.2]{allaire}, we can extract a subsequence that two-scale converges to a limit in $L^2(\Omega \times Y)$, namely there exists a function $\phi\in L^2(\Omega \times Y)$ such that $\chi_\varepsilon \overset{2}{\rightharpoonup} \phi$.

\begin{lemma}\label{lemmawe}
If $\chi_\varepsilon \overset{2}{\rightharpoonup}\phi$,  
then 
$w_\varepsilon(x,y) \coloneqq \chi_\varepsilon(x)\chi_\varepsilon(y) \overset{2}{\rightharpoonup} \phi(x,\sigma)\phi(y,\tau)$.
\end{lemma}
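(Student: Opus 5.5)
The statement is to be read in the product sense: $w_\varepsilon$ is a function on $\Omega\times\Omega$, and two-scale convergence is taken with respect to the $1$-periodic cell $Y\times Y$ in the variables $(x,y)$. The claim is then that for every admissible test function $\psi(x,y,\sigma,\tau)$, $Y\times Y$-periodic in $(\sigma,\tau)$,
\[
\int_{\Omega\times\Omega} \chi_\varepsilon(x)\chi_\varepsilon(y)\,\psi\!\left(x,y,\tfrac{x}{\varepsilon},\tfrac{y}{\varepsilon}\right)\mathrm{d}x\,\mathrm{d}y \;\longrightarrow\; \int_{\Omega\times\Omega}\int_{Y\times Y}\phi(x,\sigma)\phi(y,\tau)\,\psi(x,y,\sigma,\tau)\,\mathrm{d}\sigma\,\mathrm{d}\tau\,\mathrm{d}x\,\mathrm{d}y .
\]
The plan has three steps: prove this for tensor-product test functions, extend it by linearity and density, and then pass to general $\psi$.

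\emph{Step 1.} Take $\psi(x,y,\sigma,\tau)=\psi_1(x,\sigma)\psi_2(y,\tau)$ with $\psi_i\in C_c(\Omega;C_\#(Y))$. By Fubini the left-hand side factors as
\[
\Big(\int_\Omega \chi_\varepsilon(x)\psi_1(x,\tfrac{x}{\varepsilon})\,\mathrm{d}x\Big)\Big(\int_\Omega \chi_\varepsilon(y)\psi_2(y,\tfrac{y}{\varepsilon})\,\mathrm{d}y\Big).
\]
By the hypothesis $\chi_\varepsilon\overset{2}{\rightharpoonup}\phi$, each factor converges to the corresponding two-scale integral. Since the limit of a product of convergent real sequences is the product of the limits, this gives the claim. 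By linearity it then holds on the algebraic tensor product $\mathcal{T}=\mathrm{span}\{\psi_1\otimes\psi_2\}$.

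\emph{Step 2.} Extend to all $\psi\in C_c(\Omega\times\Omega;C_\#(Y\times Y))$. By Stone--Weierstrass (or partitions of unity combined with trigonometric polynomials in $(\sigma,\tau)$), $\mathcal{T}$ is dense in the sup norm on functions supported in a fixed compact $K\times K$. For $\psi$ supported in $K\times K$, pick $\psi_n\in\mathcal{T}$ with $\|\psi-\psi_n\|_\infty\to0$, supported in a slightly larger compact. The error is then controlled uniformly in $\varepsilon$:
\[
\Big|\int w_\varepsilon(\psi-\psi_n)(x,y,\tfrac{x}{\varepsilon},\tfrac{y}{\varepsilon})\Big| \le |\Omega|^2\|\psi-\psi_n\|_\infty ,
\]
using $0\le w_\varepsilon\le1$. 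The analogous bound on the limit side uses $\|\phi\otimes\phi\|_{L^1}\le\|\phi\|_{L^2}^2|\Omega|$. A standard $\varepsilon/3$ argument concludes.

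\emph{Step 3.} Pass to more general test functions, such as $L^2(\Omega\times\Omega;C_\#)$ or $\psi=a((x-y)/\varepsilon)$-type weights, which is what is needed later. This is the expected obstacle. Boundedness of $w_\varepsilon$ in $L^\infty$ gives uniform $L^1$-type control of the error, so the extension works for any class of admissible test functions approximable in $L^1(\Omega\times\Omega;C_\#(Y\times Y))$. There is one caveat: the kernel $a(\frac{x-y}{\varepsilon})$ is only $L^\infty$ and depends on $\sigma-\tau$, so it is not a Carathéodory admissible function in the usual sense. I will therefore state the lemma for admissible test functions only, and handle the kernel separately later. First I would try approximating $a$ in $L^1(Y)$ by continuous periodic functions, controlling the error with $0\le w_\varepsilon\le1$ and the change of variables $(x,y)\mapsto(x/\varepsilon,y/\varepsilon)$ on periodicity cells. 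A useful byproduct is that $0\le\phi\le1$ a.e., since $0\le\chi_\varepsilon\le1$ and two-scale limits preserve such bounds.
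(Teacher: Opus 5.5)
Your proof is correct and follows the paper's own argument. You test against tensor products $\psi_1(x,\sigma)\psi_2(y,\tau)$, factor by Fubini, pass to the limit in each factor, and conclude by Stone--Weierstrass density; your Step 2 just makes explicit the uniform-in-$\varepsilon$ error bound from $0\le w_\varepsilon\le 1$ that the paper compresses into ``by density.''

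Step 3 is not needed for the lemma. The extension to the $L^\infty$ kernel $a(\sigma-\tau)$ by $L^1$-approximation with continuous periodic functions is exactly the ``standard density argument'' the paper uses in the proof of Proposition~\ref{hom}.
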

\begin{proof}
We test the bounded sequence $\{w_\varepsilon\}\subset L^2(\Omega \times \Omega)$ against functions in $\mathcal{D}(\Omega \times \Omega; C^\infty_\#(Y\times Y))$, which is the space of smooth, compactly supported functions on $\Omega \times \Omega$, taking values in the space of smooth, $(Y \times Y)$-periodic functions. By invoking the Stone--Weierstrass Theorem, we may restrict our analysis to 
test functions of the form $\Psi(x,y,\sigma,\tau) = \psi_1(x,\sigma)\psi_2(y,\tau)$, with $\psi_1, \psi_2 \in \mathcal{D}(\Omega; C^\infty_\#(Y))$. We obtain
\begin{equation*}
\begin{split}
&\lim_{\varepsilon \to 0}\int_{\Omega \times \Omega} w_\varepsilon(x,y)\Psi \biggl(x,y,\frac{x}{\varepsilon},\frac{y}{\varepsilon}\biggr) \, \mathrm{d}x \mathrm{d}y \\
= &\lim_{\varepsilon \to 0}\left(\int_\Omega\chi_\varepsilon(x)\psi_1\left(x,\frac{x}{\varepsilon}\right)\mathrm{d}x \right)\left(\int_\Omega\chi_\varepsilon(y)\psi_2\left(y,\frac{y}{\varepsilon}\right)\mathrm{d}y \right) \\
= &\int_{\Omega \times \Omega}\int_Y\int_Y\phi(x,\sigma)\phi(y,\tau)\psi_1(x,\sigma)\psi_2(y,\tau) \, \mathrm{d}\sigma  \mathrm{d}\tau \mathrm{d}x \mathrm{d}y,
\end{split}
\end{equation*}
where we used the Fubini--Tonelli Theorem; the thesis follows by density.
\end{proof}


We define the functional $F\colon L^\infty(Y;[0,1])\to\mathbb{R}$ by
\begin{equation}\label{Fvarphi}
 F(\varphi) \coloneqq \int_Y \int_Y a(\sigma-\tau) \Big[\varphi(\sigma)\varphi(\tau) + (1-\varphi(\sigma))(1-\varphi(\tau))\Big] \, \mathrm{d}\sigma \mathrm{d}\tau.
\end{equation}
\begin{proposition}\label{hom}
Let $u \in L^1(\Omega)$ and let $\{u_\varepsilon\}$ be a sequence weakly converging to $u$ in~$L^1(\Omega)$ such that $\sup_\varepsilon F_\varepsilon(u_\varepsilon) < +\infty$.
Let $\chi_\varepsilon$ be the characteristic functions associated with $u_\varepsilon$ defined by \eqref{chie}, and suppose they admit a two-scale limit $\phi\in L^2(\Omega \times Y)$.
Upon defining $\varphi(\cdot) \coloneqq \int_\Omega\phi(\xi,\cdot)\,\mathrm{d}\xi$,
we have
\begin{equation}
    \lim_{\varepsilon \to 0}F_\varepsilon(u_\varepsilon) = F(\varphi).
\end{equation}
\end{proposition}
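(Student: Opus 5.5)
The plan is to expand the integrand in \eqref{Feps} algebraically and pass to the limit term by term. Since $\chi_\varepsilon(x)\chi_\varepsilon(y)+(1-\chi_\varepsilon(x))(1-\chi_\varepsilon(y)) = 1-\chi_\varepsilon(x)-\chi_\varepsilon(y)+2\chi_\varepsilon(x)\chi_\varepsilon(y)$, I would write
\[
F_\varepsilon(u_\varepsilon)=I_\varepsilon^0 - I_\varepsilon^1 - I_\varepsilon^2 + 2 I_\varepsilon^3,
\]
with $I^0_\varepsilon=\int_{\Omega\times\Omega}a(\frac{x-y}{\varepsilon})\,\mathrm{d}x\mathrm{d}y$, the two linear terms $I^1_\varepsilon, I^2_\varepsilon$ containing $\chi_\varepsilon(x)$ and $\chi_\varepsilon(y)$ respectively, and the quadratic term $I^3_\varepsilon=\int_{\Omega\times\Omega}a(\frac{x-y}{\varepsilon})w_\varepsilon(x,y)\,\mathrm{d}x\mathrm{d}y$. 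On the limit side, periodicity of $a$ gives $\int_Y a(\sigma-\tau)\,\mathrm{d}\tau=\langle a\rangle:=\int_Y a$. Hence
\[
F(\varphi)=\langle a\rangle-2\langle a\rangle\int_Y\varphi+2\int_Y\!\int_Y a(\sigma-\tau)\varphi(\sigma)\varphi(\tau)\,\mathrm{d}\sigma\mathrm{d}\tau,
\]
and it suffices to identify the limit of each $I^j_\varepsilon$ with the corresponding term. I would also note early that $0\le\phi\le1$ a.e., because $0\le\chi_\varepsilon\le1$ and two-scale limits preserve such pointwise bounds; consequently $\varphi\in L^\infty(Y;[0,1])$ and $F(\varphi)$ is well defined.

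For the elementary terms, set $A_\varepsilon(x):=\int_0^1 a(\frac{x-y}{\varepsilon})\,\mathrm{d}y$. This is $\varepsilon$ times the integral of $a$ over an interval of length $1/\varepsilon$, so $|A_\varepsilon(x)-\langle a\rangle|\le C\varepsilon\|a\|_\infty$ uniformly in $x$. This immediately gives $I^0_\varepsilon\to\langle a\rangle$. It also gives $I^1_\varepsilon=\int_\Omega\chi_\varepsilon A_\varepsilon=\langle a\rangle\int_\Omega\chi_\varepsilon+O(\varepsilon)$. Since two-scale convergence implies weak convergence of $\chi_\varepsilon$ to $\int_Y\phi(\cdot,\sigma)\,\mathrm{d}\sigma$, this tends to $\langle a\rangle\int_\Omega\int_Y\phi=\langle a\rangle\int_Y\varphi$. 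The term $I^2_\varepsilon$ is handled symmetrically.

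The quadratic term is the main step. Formally, I apply Lemma \ref{lemmawe} with the test function $\Psi(x,y,\sigma,\tau)=a(\sigma-\tau)$. Fubini then gives
\[
\int_{\Omega\times\Omega}\int_{Y\times Y}\phi(x,\sigma)\phi(y,\tau)a(\sigma-\tau)=\int_Y\!\int_Y a(\sigma-\tau)\varphi(\sigma)\varphi(\tau),
\]
which is exactly what is needed.

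The obstacle is that $\Psi$ is not admissible. It lacks compact support in $\Omega\times\Omega$, and $a$ is only $L^\infty$, so it is neither continuous nor smooth. I would remove these defects by two approximations, using that $0\le w_\varepsilon\le1$.
\begin{itemize}
\item \emph{Compact support.} Multiply by a cutoff $\eta_k\in\mathcal{D}(\Omega\times\Omega)$ with $0\le\eta_k\le1$ and $|\{\eta_k\neq1\}|\to0$. The resulting error is at most $\|a\|_\infty|\{\eta_k\ne1\}|$, uniformly in $\varepsilon$, and the same bound holds on the limit side.
\item \emph{Smoothness.} Replace $a$ by smooth periodic $a_k$ with $\|a-a_k\|_{L^1(Y)}\to0$. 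On the $\varepsilon$ side the error is bounded by $\int_{\Omega\times\Omega}|a-a_k|(\frac{x-y}{\varepsilon})\,\mathrm{d}x\mathrm{d}y$. By the same averaging argument as for $I^0_\varepsilon$, this tends to $\|a-a_k\|_{L^1(Y)}$. On the limit side the error is bounded by $\int_{Y\times Y}|a-a_k|(\sigma-\tau)=\|a-a_k\|_{L^1(Y)}$, using $0\le\varphi\le1$.
\end{itemize}
With both approximations in place, Lemma \ref{lemmawe} applies to the admissible test function $\eta_k(x,y)a_k(\sigma-\tau)$. A standard $\limsup$/$\liminf$ argument as $k\to\infty$ then yields $I^3_\varepsilon\to\int_Y\int_Y a(\sigma-\tau)\varphi(\sigma)\varphi(\tau)$.

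Summing the four limits reproduces $F(\varphi)$ along the subsequence on which $\chi_\varepsilon\overset{2}{\rightharpoonup}\phi$, which is the sequence considered in the statement.
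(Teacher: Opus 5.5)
Your proof is correct and follows the paper's route. As in the paper, the quadratic term comes from Lemma \ref{lemmawe} with the test function $a(\sigma-\tau)$ followed by Fubini. There are two small differences. First, you expand the $(1-\chi_\varepsilon(x))(1-\chi_\varepsilon(y))$ term into constant and linear pieces, handled by the estimate $|A_\varepsilon-\langle a\rangle|=O(\varepsilon)$; the paper instead applies Lemma \ref{lemmawe} a second time to $1-\chi_\varepsilon\overset{2}{\rightharpoonup}1-\phi$. Second, you carry out the cutoff and the smoothing of $a$ explicitly, and your uniform-in-$\varepsilon$ bound $\int_{\Omega\times\Omega}|a-a_k|\bigl(\frac{x-y}{\varepsilon}\bigr)\,\mathrm{d}x\mathrm{d}y\to\|a-a_k\|_{L^1(Y)}$ is exactly what the paper's one-line appeal to ``density of $C^0$ in $L^1$'' leaves unstated.
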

\begin{proof}
While the two-scale convergence theorem \cite[Theorem 1.2]{allaire} typically requires test functions to be in $ L^2(\Omega; C_{\#}(Y))$, a standard density argument of $C^0$ in $L^1$ allows us to use $a \in L^\infty_{\#}(Y)$ as an admissible test function. 
Lemma~\ref{lemmawe} along with the Fubini--Tonelli Theorem yields
\begin{equation*}
\begin{split}
\lim_{\varepsilon \to 0}\int_{\Omega \times \Omega} a\left(\frac{x-y}{\varepsilon}\right) \chi_\varepsilon(x)\chi_\varepsilon(y)\,\mathrm{d}x\mathrm{d}y &= \int_{\Omega \times \Omega} \int_Y \int_Y a(\sigma-\tau)\phi(x,\sigma)\phi(y,\tau) \, \mathrm{d}\sigma  \mathrm{d}\tau  \mathrm{d}x  \mathrm{d}y \\
&= \int_Y \int_Y a(\sigma-\tau)\varphi(\sigma) \varphi(\tau) \, \mathrm{d}\sigma  \mathrm{d}\tau.
\end{split}
\end{equation*}
Defining $v_\varepsilon(x,y) \coloneqq (1-\chi_\varepsilon(x))(1-\chi_\varepsilon(y))$, Lemma \ref{lemmawe} similarly ensures $v_\varepsilon(x,y)  \overset{2}{\rightharpoonup} (1-\phi(x,\sigma))(1-\phi(y,\tau))$, and the conclusion follows by linearity.
\end{proof}

\begin{corollary}
Let $u \in L^1(\Omega)$ and let $\{u_\varepsilon\}$ be a sequence weakly converging to $u$ in $L^1(\Omega)$ such that $\sup_\varepsilon F_\varepsilon(u_\varepsilon) < +\infty$, and let $z \in \mathbb{R}$ be the limit of the associated sequence~$\{z_\varepsilon\}$ given by Lemma \ref{lemma_structure}.
By letting $t \coloneqq \int_Y \varphi(\sigma)\,\mathrm{d} \sigma$, the limit function $u$ satisfies the relation
\begin{equation}\label{la8}
\int_\Omega u(x) \, \mathrm{d}x = z + t.
\end{equation}
\end{corollary}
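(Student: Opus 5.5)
We write $u_\varepsilon = z_\varepsilon + \chi_\varepsilon$ almost everywhere in $\Omega$, which is exactly the content of Lemma~\ref{lemma_structure} together with the definition \eqref{chie}. Integrating over $\Omega=(0,1)$, whose measure is $1$, gives
\[
\int_\Omega u_\varepsilon(x)\,\mathrm{d}x = z_\varepsilon + \int_\Omega \chi_\varepsilon(x)\,\mathrm{d}x .
\]
We then pass to the limit in each of the three terms along the subsequence for which $z_\varepsilon \to z$ and $\chi_\varepsilon \overset{2}{\rightharpoonup} \phi$.

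The left-hand side converges to $\int_\Omega u\,\mathrm{d}x$. This follows from the weak convergence $u_\varepsilon \rightharpoonup u$ in $L^1(\Omega)$, tested against the constant function $1 \in L^\infty(\Omega)$. The term $z_\varepsilon$ converges to $z$ by Lemma~\ref{lemma_structure}.

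For the last term we use two-scale convergence with a test function independent of both variables. Strictly, the admissible class consists of functions $\psi \in \mathcal{D}(\Omega; C_\#(Y))$, and $\psi\equiv 1$ is not compactly supported in $\Omega$. We handle this in the standard way: take cutoffs $\eta_k \in \mathcal{D}(\Omega)$ with $0\le \eta_k\le 1$ and $\eta_k \to 1$ in $L^1(\Omega)$. Since $0\le\chi_\varepsilon\le 1$, we have
\[
\left|\int_\Omega \chi_\varepsilon(1-\eta_k)\,\mathrm{d}x\right| \le \|1-\eta_k\|_{L^1}
\]
uniformly in $\varepsilon$. Moreover $\phi\in[0,1]$ almost everywhere, because two-scale limits preserve the pointwise bounds (test against nonnegative $\psi$). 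The same estimate therefore holds for $\phi$, and a $3\delta$ argument gives
\[
\int_\Omega \chi_\varepsilon\,\mathrm{d}x \to \int_\Omega\int_Y \phi(\xi,\sigma)\,\mathrm{d}\sigma\,\mathrm{d}\xi .
\]
Alternatively, one can invoke the known fact that the weak $L^2$ limit of $\chi_\varepsilon$ is $\int_Y \phi(\cdot,\sigma)\,\mathrm{d}\sigma$, which reduces this step to weak convergence tested against $1\in L^2(\Omega)$.

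Finally, Fubini–Tonelli applies since $\phi\in L^2(\Omega\times Y)\subset L^1(\Omega\times Y)$ on a set of finite measure. It lets us exchange the integrals:
\[
\int_\Omega\int_Y \phi\,\mathrm{d}\sigma\,\mathrm{d}\xi = \int_Y \varphi(\sigma)\,\mathrm{d}\sigma = t .
\]
Combining the three limits yields \eqref{la8}.

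The only delicate point is justifying the use of the non-compactly supported test function $1$. The uniform bound $0\le\chi_\varepsilon\le1$ makes this routine, so we expect no real obstacle.
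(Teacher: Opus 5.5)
Your proposal is correct and follows essentially the same route as the paper. You write $u_\varepsilon = z_\varepsilon + \chi_\varepsilon$, integrate over $\Omega$, and pass to the limit using that $\chi_\varepsilon \rightharpoonup \int_Y \phi(\cdot,\sigma)\,\mathrm{d}\sigma$ (the paper simply cites Allaire's Proposition 1.6, i.e.\ your ``alternative''), and then apply Fubini. Your cutoff argument is a harmless extra justification, since the constant $1$ already belongs to $L^2(\Omega;C_\#(Y))$ and is thus an admissible test function.
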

\begin{proof}
From \cite[Proposition 1.6]{allaire}, the two-scale convergence $\chi_\varepsilon \overset{2}{\rightharpoonup} \phi$ implies the weak convergence $\chi_\varepsilon(\cdot) \rightharpoonup \int_Y\phi(\cdot,\sigma)\,\mathrm{d}\sigma$ in $L^2(\Omega)$, and thus in $L^1(\Omega)$. 
By integrating over~$\Omega$, we get 
\[
\int_\Omega \chi_\varepsilon(x)\,\mathrm{d}x \to \int_\Omega\left(\int_Y\phi(x,\sigma) \, \mathrm{d}\sigma\right) \mathrm{d}x = \int_Y \varphi(\sigma)\,\mathrm{d}\sigma = t.
\]
By Lemma \ref{lemma_structure}, we can write $u_\varepsilon(x) = z_\varepsilon + \chi_\varepsilon(x)$. Integrating and passing to the limit yields the conclusion.
\end{proof}

\section{Minimization of the Cell Problem}\label{sec3}
To explicitly investigate the optimal microstructures governing the homogenized functional, we specialize our analysis to a symmetric, non-trivial piece-wise constant periodic weight.
For fixed parameters $\beta > \alpha > 0$ and a given $\lambda \in (0,1)$, we consider a symmetric profile for the kernel $a_\lambda$, centered around $t = 1/2$:
\[
a_\lambda(t) = \begin{cases}\alpha & \text{if } t \in [0, \frac{\lambda}{2}) \cup [1-\frac{\lambda}{2}, 1)\\ \beta & \text{if } t \in [\frac{\lambda}{2}, 1-\frac{\lambda}{2}),\end{cases}
\]
extended periodically to $\mathbb{R}$. 
Without loss of generality, we can restrict ourselves to the case $0 < \lambda \le 1/2$.
The average of the weight is denoted by $\overline{a}_\lambda = \lambda \alpha + (1-\lambda)\beta$.

The sequence of non-local functionals associated with this weight is
\begin{equation}\label{Feul}
        F_{\varepsilon}^{\lambda}(u) = \int_{\Omega \times \Omega} a_\lambda \left(\frac{x-y}{\varepsilon}\right) f(u(x)-u(y)) \, \mathrm{d}x \mathrm{d}y.
\end{equation}
As proved in Proposition \ref{hom}, we have that
\begin{equation}\label{Fvarphi_l}
\lim_{\varepsilon \to 0}F_\varepsilon^\lambda(u_\varepsilon) = \int_Y \int_Y a_\lambda(\sigma-\tau) \Big[\varphi(\sigma)\varphi(\tau) + (1-\varphi(\sigma))(1-\varphi(\tau))\Big] \, \mathrm{d}\sigma \mathrm{d}\tau \eqqcolon F^\lambda(\varphi),
\end{equation}
which we aim to minimize over all admissible microscopic profiles $\varphi$ subject to the volume fraction constraint $\int_Y \varphi(\sigma)\,\mathrm{d}\sigma\eqqcolon t\in[0,1]$. Thus, we define the cell problem energy:
\begin{equation}\label{gamma_t}
\gamma_\lambda(t) \coloneqq \min \left\{ F^\lambda(\varphi) : \varphi \in L^\infty(Y; [0,1])  \text{ with } \int_Y \varphi(\sigma)\,\mathrm{d}\sigma = t \right\}.
\end{equation}

\begin{proposition}\label{lemma3.3}
    The optimal profile minimizing the cell problem \eqref{gamma_t} is given by the characteristic function $\varphi_t(x) = \chi_{[0, t/2) \cup (1-t/2, 1)}(x)$ modulo 1, and the explicit minimum energy is
\begin{equation}\label{gamma_explicit}
\gamma_\lambda(t) = \begin{cases}
2\alpha t^2 - 2\overline{a}_\lambda t + \overline{a}_\lambda & \text{if $t \in[0, \frac{\lambda}{2}]$,} \\[1ex]
2\beta(t^2-t) - \frac{\alpha -\beta}{2}\lambda^2 + \overline{a}_\lambda & \text{if $t\in [\frac{\lambda}{2}, 1-\frac{\lambda}{2}],$} \\[1ex]
2\alpha(1-t)^2 + 2\overline{a}_\lambda t - \overline{a}_\lambda & \text{if $t\in [1-\frac{\lambda}{2}, 1]$.}
\end{cases}
\end{equation}
\end{proposition}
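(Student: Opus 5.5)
The plan is to reduce the cell problem to maximizing a quadratic interaction term and then use a rearrangement argument on the circle $Y=\mathbb{R}/\mathbb{Z}$. First I expand the bracket in \eqref{Fvarphi_l}:
\[
\varphi(\sigma)\varphi(\tau)+(1-\varphi(\sigma))(1-\varphi(\tau)) = 1-\varphi(\sigma)-\varphi(\tau)+2\varphi(\sigma)\varphi(\tau).
\]
Periodicity gives $\int_Y a_\lambda(\sigma-\tau)\,\mathrm{d}\tau=\overline{a}_\lambda$ for every $\sigma$. Hence, for every admissible $\varphi$ with $\int_Y\varphi=t$,
\[
F^\lambda(\varphi)=\overline{a}_\lambda-2\overline{a}_\lambda t+2\int_Y\int_Y a_\lambda(\sigma-\tau)\varphi(\sigma)\varphi(\tau)\,\mathrm{d}\sigma\mathrm{d}\tau .
\]
Next I write $a_\lambda=\beta-(\beta-\alpha)\chi_{I}$, where $I=[-\lambda/2,\lambda/2]$ is taken modulo $1$. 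Since $\beta>\alpha$, minimizing $F^\lambda$ under the volume constraint becomes equivalent to \emph{maximizing}
\[
G(\varphi)\coloneqq\int_Y\int_Y\chi_I(\sigma-\tau)\varphi(\sigma)\varphi(\tau)\,\mathrm{d}\sigma\mathrm{d}\tau,
\]
because $F^\lambda(\varphi)=\overline{a}_\lambda-2\overline{a}_\lambda t+2\beta t^2-2(\beta-\alpha)G(\varphi)$. I also note the symmetry $F^\lambda(\varphi)=F^\lambda(1-\varphi)$, which gives $\gamma_\lambda(t)=\gamma_\lambda(1-t)$. This reduces the third case of \eqref{gamma_explicit} to the first, and it is consistent with the complement of $\varphi_t$ being an interval of length $1-t$ (centered at $1/2$ rather than $0$, which is irrelevant by translation invariance).

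\emph{Case $t\le\lambda/2$.} The trivial bound $G(\varphi)\le\bigl(\int_Y\varphi\bigr)^2=t^2$ holds because $\chi_I\le1$. For $\varphi_t$, any two points of $[-t/2,t/2]$ are at circular distance at most $t\le\lambda/2$, so $\chi_I(\sigma-\tau)=1$ on the support and $G(\varphi_t)=t^2$. Substituting into the identity above and using $\beta-(\beta-\alpha)=\alpha$ yields $2\alpha t^2-2\overline{a}_\lambda t+\overline{a}_\lambda$.

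\emph{Case $\lambda/2\le t\le1-\lambda/2$.} Matching the claimed formula, I must show
\[
G(\varphi)\le\lambda t-\frac{\lambda^2}{4}=G(\varphi_t).
\]
The equality for $\varphi_t$ is a direct computation. Writing $J=[-t/2,t/2]$, one has $G(\varphi_t)=\int_I|J\cap(J+s)|\,\mathrm{d}s$. For $|s|\le\lambda/2$ with $\lambda/2\le t\le1-\lambda/2$ there is no wrap-around, so $|J\cap(J+s)|=t-|s|$, and integrating gives $\lambda t-\lambda^2/4$.

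The upper bound for general $\varphi$ is the main obstacle. I would first invoke the Riesz rearrangement inequality on the circle (Baernstein--Taylor). Since $\chi_I$ is symmetric decreasing about $0$, we get $G(\varphi)\le G(\varphi^*)$, where $\varphi^*$ is the symmetric decreasing rearrangement. Note that $\varphi^*$ still takes values in $[0,1]$ and has mass $t$.

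It then remains to show that, among symmetric decreasing $[0,1]$-valued profiles of mass $t$, the bang-bang profile $\varphi_t$ is optimal. For this I would use a bathtub argument. Set $K(\sigma)=\int_Y\chi_I(\sigma-\tau)\varphi^*(\tau)\,\mathrm{d}\tau$. This function is symmetric decreasing, takes values in $[0,\lambda]$, and satisfies $G(\varphi^*)=\int_Y K\varphi^*$. I would then try to compare with $\varphi_t$ via the layer-cake decomposition $\varphi^*=\int_0^1\chi_{J_r}\,\mathrm{d}r$, with nested centered intervals $J_r$, so that $G(\varphi^*)=\int_0^1\int_0^1 g(|J_r|,|J_s|)\,\mathrm{d}r\,\mathrm{d}s$. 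Here $g(p,q)=\int_I|J_p\cap(J_q+s)|\,\mathrm{d}s$ is explicit and piecewise quadratic. The goal is the inequality $\iint g(p_r,p_s)\le g(t,t)$ under $\int_0^1 p_r\,\mathrm{d}r=t$, i.e.\ a concavity-type property of $g$. If this direct route proves messy, my fallback is a first-variation argument. At a maximizer, the bathtub principle forces $\varphi^*$ to equal $1$ where $K$ exceeds a threshold and $0$ below it. Since $K$ is symmetric decreasing, the optimizer is then the characteristic function of a centered interval, which can only be $\varphi_t$. The delicate point in this fallback is the possible flat region of $K$, where $K\equiv\lambda$; there one must check that mass placed in the plateau does not change $G$.

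Once optimality of $\varphi_t$ is established, substituting $G(\varphi_t)=\lambda t-\lambda^2/4$ into the identity gives $2\beta(t^2-t)-\frac{\alpha-\beta}{2}\lambda^2+\overline{a}_\lambda$. The symmetry $\gamma_\lambda(t)=\gamma_\lambda(1-t)$ then gives the third case, and one checks that the three expressions agree at $t=\lambda/2$ and $t=1-\lambda/2$.
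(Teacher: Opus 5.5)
Your reduction to maximizing $G$ is correct, and so are the symmetry $\gamma_\lambda(t)=\gamma_\lambda(1-t)$ and the computation $G(\varphi_t)=\lambda t-\lambda^2/4$. Your proof for $t\le\lambda/2$ is also correct: the bound $G(\varphi)\le t^2$ settles that regime with no rearrangement at all. The gap is the upper bound in the middle regime. After the Baernstein--Taylor step you still have to show that $\varphi_t$ beats every symmetric decreasing $[0,1]$-valued $\varphi^*$ of mass $t$, and neither of your routes does this.

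In the layer-cake route you only restate the target inequality $\iint g(p_r,p_s)\le g(t,t)$. It cannot come from concavity, because $g$ is not concave: for $p+q\le\lambda$ one has $g(p,q)=pq$.

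In the first-variation route, the plateau you flag is the harmless one. $K(\sigma)=\lambda$ forces $\varphi^*=1$ a.e.\ on $\sigma+I$, so nothing is left undetermined there. The real obstruction is a plateau at a threshold level $c<\lambda$, and such plateaus do occur. For $\varphi\equiv t$ one has $K\equiv\lambda t$, so every zero-mean perturbation has vanishing first variation. Yet redistributing that mass changes $G$, from $\lambda t^2$ up to $\lambda t-\lambda^2/4$. So first-order conditions cannot single out $\varphi_t$, and your expectation that mass placed in a plateau does not change $G$ is false.

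The missing idea is to use bilinearity and apply the bathtub principle twice. Set $B(\varphi,\psi)\coloneqq\int_Y\int_Y\chi_I(\sigma-\tau)\varphi(\sigma)\psi(\tau)\,\mathrm{d}\sigma\mathrm{d}\tau$, which is symmetric because $I=-I$. You need two facts:
\begin{itemize}
\item If $h$ is symmetric decreasing on the circle and $0\le\varphi\le1$ with $\int_Y\varphi=t$, then $\int_Y\varphi h\le\int_Y\varphi_t h$.
\item $\chi_I*\psi$ is symmetric decreasing whenever $\psi$ is. By layer cake this reduces to the overlap of two centred arcs, which is non-increasing in the distance between their centres.
\end{itemize}
Hence
\[
G(\varphi^*)=B(\varphi^*,\varphi^*)\le B(\varphi_t,\varphi^*)=B(\varphi^*,\varphi_t)\le B(\varphi_t,\varphi_t)=G(\varphi_t),
\]
and no plateau analysis is needed. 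The paper's own proof skips exactly this point: it asserts that Riesz--Sobolev directly yields a symmetric decreasing \emph{indicator} function. With the argument above inserted, your proposal follows the paper's route (reduction to $K^\lambda$, rearrangement, explicit computation) and is more complete than the paper's.
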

\begin{proof}
Let us expand the integrand defining $F^\lambda(\varphi)$:
\[
a_\lambda(\sigma - \tau)\big[\varphi(\sigma)\varphi(\tau) + (1-\varphi(\sigma))(1-\varphi(\tau))\big] = a_\lambda(\sigma - \tau)\big[2\varphi(\sigma)\varphi(\tau) - \varphi(\sigma) - \varphi(\tau) + 1\big].
\]
Integrating this over $Y \times Y$ and noting that $\int_Y a_\lambda(\sigma-\tau)\mathrm{d}\tau = \overline{a}_\lambda$ for any fixed $\sigma$, the functional simplifies to
\[
F^\lambda(\varphi) = 2J^\lambda(\varphi) - 2\overline{a}_\lambda t + \overline{a}_\lambda,
\]
where $J^\lambda(\varphi) \coloneqq \int_Y \int_Y a_\lambda(\sigma-\tau)\varphi(\sigma)\varphi(\tau) \, \mathrm{d}\sigma \mathrm{d}\tau$.

Since $t$ and $\overline{a}_\lambda$ are fixed, minimizing~$F^\lambda$ is equivalent to minimizing~$J^\lambda$\,. We can rewrite the weight as $a_\lambda(x) = \alpha + (\beta - \alpha)\chi_{(\lambda/2, 1-\lambda/2)}(x)$. 
Consequently, the functional~$J^\lambda$ reads
\begin{align*}
    J^\lambda(\varphi) &= \int_Y \int_Y\big[\alpha + (\beta - \alpha)\chi_{(\lambda/2, 1-\lambda/2)}(\sigma-\tau)\big] \varphi(\sigma)\varphi(\tau)\,\mathrm{d}\sigma \mathrm{d}\tau\\[1ex]
    &= \alpha t^2 + (\beta - \alpha) \int_Y\int_Y \chi_{(\lambda/2, 1-\lambda/2)}(\sigma-\tau)\varphi(\sigma)\varphi(\tau)\,\mathrm{d}\sigma \mathrm{d}\tau \\[1ex]
    &= \alpha t^2 + (\beta - \alpha)\left[t^2 - \int_Y \int_Y \chi_{(0, \frac{\lambda}{2}) \cup (1-\frac{\lambda}{2}, 1)}(\sigma-\tau)\varphi(\sigma)\varphi(\tau) \,\mathrm{d}\sigma \mathrm{d}\tau  \right],
\end{align*}
so that minimizing it is, in turn, equivalent to maximizing (recall that $\beta-\alpha>0$) the term
\[
K^\lambda(\varphi) \coloneqq \int_Y \int_Y \chi_{(-\lambda/2,\lambda/2)}(\sigma-\tau)\varphi(\sigma)\varphi(\tau) \, \mathrm{d}\sigma \mathrm{d}\tau,
\]
where the intervals are understood modulo 1. Since the kernel $\chi_{(-\lambda/2,\lambda/2)}$ is symmetrically decreasing for any $\lambda \in (0, 1/2]$, the Riesz--Sobolev rearrangement inequality implies that $K^\lambda$ is maximized when $\varphi$ is also a symmetrically decreasing indicator function. Hence, the optimal profile is 
\begin{equation*}
    \varphi_t(x) \coloneqq \chi_{(-t/2, t/2)}(x) \equiv \chi_{[0, t/2) \cup (1-t/2, 1)}(x)\quad \text{modulo 1},
\end{equation*}
which satisfies the constraint $\int_Y \varphi(\sigma)\,\mathrm{d}\sigma = t$.

Computing the integral $J^\lambda(\varphi_t) = \int_{-t/2}^{t/2} \int_{-t/2}^{t/2} a_\lambda(\sigma-\tau) \, \mathrm{d}\sigma \mathrm{d}\tau$ for the three regimes $t < \lambda/2$, $t \in [\lambda/2, 1-\lambda/2)$, and $t \ge 1-\lambda/2$ yields the polynomial expressions in~\eqref{gamma_explicit}, whose continuity with respect to~$t$ is immediate to verify.
\end{proof}
\begin{remark}
It is worth noting that the assumption $\alpha < \beta$ drives the symmetric rearrangement to concentrate the mass of the optimal profile $\varphi_t$ around the boundary of the unit cell (i.e., around $t=0$), precisely where the low-cost kernel $\alpha$ is distributed.

If, conversely, one considers $\alpha > \beta$, the energetic penalty is inverted. To minimize the intersection with the expensive $\alpha$-phase, the optimal profile concentrates its mass symmetrically within the central low-cost plateau, yielding
    \[
        \varphi_t(x) = \chi_{(1/2-t/2, 1/2+t/2)}(x).
    \]
However, the minimum energy $\gamma_\lambda(t)$ has exactly the same shape as in \eqref{gamma_explicit}, except that~$\alpha$ and $\beta$ are swapped, as well as $\lambda$ and $1-\lambda$.
In the sequel, it may be useful to take this into consideration.
\end{remark}
We define the final homogenized functional evaluated on a generic function $u \in L^1(\Omega)$ as:
 \begin{equation}\label{Fbar}
 \overline{F}^\lambda(u) \coloneqq \min \bigl\{ \gamma_\lambda(t) : t\in I_u\coloneqq [\iota(u), \varsigma(u)] \bigr\},
 \end{equation}
where $\iota(u) \coloneqq \int_\Omega u(x) \, \mathrm{d}x - \operatorname{ess-inf} u$ and $\varsigma(u) \coloneqq \int_\Omega u(x) \, \mathrm{d}x - \operatorname{ess-sup} u + 1$. Note that if the essential oscillation of $u$ is strictly greater than 1 (i.e., $\operatorname{ess-sup} u - \operatorname{ess-inf} u > 1$), the interval $I_u$ is empty  and $\overline{F}^\lambda(u) =\min_{t \in \emptyset}\{\gamma_\lambda(t)\}= +\infty$, which is consistent with the conditions established by Lemma \ref{lemma_structure}.

 
\begin{theorem}\label{thm3.2}
Let $u \equiv c \in \mathbb{R}$ be a constant function. Then, the $\Gamma$-limit of the sequence $\{F_\varepsilon^\lambda\}$ exists and evaluates exactly to the minimum energy of the cell problem:
\begin{equation}\label{lim1}
    \Big(\Gamma-\lim_{\varepsilon \to 0} F^\lambda_\varepsilon\Big)(c) = \overline{F}^\lambda(c) = \frac{\bigl(1 - (1-\lambda)^2\bigr)\alpha + (1-\lambda)^2\beta}{2}.
\end{equation}
\end{theorem}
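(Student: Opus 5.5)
Since $\operatorname{ess-inf} c = \operatorname{ess-sup} c = c$ and $|\Omega|=1$, we have $\iota(c)=0$ and $\varsigma(c)=1$, so $I_c=[0,1]$ and $\overline{F}^\lambda(c)=\min_{t\in[0,1]}\gamma_\lambda(t)$. The plan has three parts: (i) compute this minimum from the explicit formula \eqref{gamma_explicit}; (ii) prove the $\Gamma$-liminf inequality $\geq \min_{[0,1]}\gamma_\lambda$; (iii) build a recovery sequence reaching $\gamma_\lambda(1/2)$. Together these give existence of the $\Gamma$-limit at $c$ and its value.

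\emph{Step (i): the minimum of $\gamma_\lambda$.} On the middle branch $[\lambda/2,1-\lambda/2]$, $\gamma_\lambda$ is a convex parabola $2\beta(t^2-t)+\mathrm{const}$ with vertex at $t=1/2$. The vertex lies in this interval because $\lambda\le 1/2$. On $[0,\lambda/2]$ the derivative is $4\alpha t-2\overline{a}_\lambda\le 2\alpha\lambda-2\overline{a}_\lambda<0$, since $\overline{a}_\lambda>\alpha\lambda$. So $\gamma_\lambda$ is decreasing there. By the symmetry $t\mapsto 1-t$ of \eqref{gamma_explicit}, $\gamma_\lambda$ is increasing on $[1-\lambda/2,1]$. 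Since $\gamma_\lambda$ is continuous, its global minimum on $[0,1]$ is $\gamma_\lambda(1/2)$. Evaluating,
\[
\gamma_\lambda(1/2)=-\tfrac{\beta}{2}-\tfrac{\alpha-\beta}{2}\lambda^2+\lambda\alpha+(1-\lambda)\beta
=\tfrac{(1-\lambda)^2}{2}\beta+\tfrac{1-(1-\lambda)^2}{2}\alpha,
\]
which is the right-hand side of \eqref{lim1}.

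\emph{Step (ii): liminf inequality.} Let $u_\varepsilon\rightharpoonup c$ weakly in $L^1(\Omega)$. If $\liminf F^\lambda_\varepsilon(u_\varepsilon)=+\infty$ there is nothing to prove. Otherwise, pass to a subsequence that realizes the liminf and has bounded energy. By Lemma \ref{lemma_structure}, $u_\varepsilon=z_\varepsilon+\chi_\varepsilon$ with $\chi_\varepsilon$ as in \eqref{chie}. Extract a further subsequence with $\chi_\varepsilon\overset{2}{\rightharpoonup}\phi$. Proposition \ref{hom} then gives $\lim F^\lambda_\varepsilon(u_\varepsilon)=F^\lambda(\varphi)$, where $\varphi=\int_\Omega\phi(\xi,\cdot)\,\mathrm{d}\xi$. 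Since $0\le\chi_\varepsilon\le1$, we have $0\le\phi\le1$ a.e., so $\varphi\in L^\infty(Y;[0,1])$ and $t=\int_Y\varphi\in[0,1]$. By definition \eqref{gamma_t}, $F^\lambda(\varphi)\ge\gamma_\lambda(t)\ge\min_{[0,1]}\gamma_\lambda=\overline{F}^\lambda(c)$.

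\emph{Step (iii): recovery sequence.} Let $\varphi_{1/2}$ be the optimal profile of Proposition \ref{lemma3.3}, extended $1$-periodically, and set
\[
u_\varepsilon(x)\coloneqq c-\tfrac12+\varphi_{1/2}(x/\varepsilon).
\]
By the Riemann--Lebesgue lemma for periodic functions, $u_\varepsilon\rightharpoonup c-\tfrac12+\int_Y\varphi_{1/2}=c$ weakly in $L^1(\Omega)$. Each $u_\varepsilon$ takes only the values $c-\tfrac12$ and $c+\tfrac12$, so its energy is finite. Here $\chi_\varepsilon(x)=\varphi_{1/2}(x/\varepsilon)$, which two-scale converges to $\phi(x,\sigma)=\varphi_{1/2}(\sigma)$. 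Since $|\Omega|=1$, we get $\varphi=\varphi_{1/2}$, and Proposition \ref{hom} yields $\lim F^\lambda_\varepsilon(u_\varepsilon)=F^\lambda(\varphi_{1/2})=\gamma_\lambda(1/2)$. Combined with step (ii), this shows the $\Gamma$-liminf and $\Gamma$-limsup coincide at $c$, with value $\overline{F}^\lambda(c)$.

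\emph{Main obstacle.} I expect the delicate point to be the use of Proposition \ref{hom}, namely passing to the limit with the merely $L^\infty$ test kernel $a_\lambda((x-y)/\varepsilon)$. In the recovery step this can be sidestepped: $\chi_\varepsilon$ is an exact periodic oscillation, so a direct change of variables on the double integral, together with periodicity, already gives the limit $\int_Y\int_Y a_\lambda(\sigma-\tau)\,\Phi(\sigma,\tau)\,\mathrm{d}\sigma\,\mathrm{d}\tau$, where $\Phi(\sigma,\tau)=\varphi_{1/2}(\sigma)\varphi_{1/2}(\tau)+(1-\varphi_{1/2}(\sigma))(1-\varphi_{1/2}(\tau))$. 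The only other care needed is the subsequence bookkeeping in step (ii).
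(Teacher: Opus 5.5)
Your proposal is correct and follows essentially the same route as the paper. The liminf comes from Proposition~\ref{hom} together with $F^\lambda(\varphi)\ge\gamma_\lambda(t)\ge\min_{[0,1]}\gamma_\lambda=\overline{F}^\lambda(c)$, and the recovery sequence is the same $c-\tfrac12+\varphi_{1/2}(x/\varepsilon)$. You also supply details the paper only asserts: the monotonicity argument showing that $t=1/2$ is the global minimizer of $\gamma_\lambda$ on $[0,1]$, the algebra giving $\gamma_\lambda(1/2)$, the bound $0\le\varphi\le 1$, and the subsequence bookkeeping.
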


    

\begin{proof}
\smallskip\noindent\textit{Liminf inequality.} 
From Proposition \ref{hom} and formulas \eqref{gamma_t} and \eqref{Fbar}, we deduce that for any $u_\e \rightharpoonup c$ in $L^1(\Omega)$ with bounded energy, it holds
\[
    \liminf_{\e \to 0}F^\lambda_\e(u_\e) = F^\lambda(\varphi) \geq \gamma_\lambda(t) \geq \overline{F}^\lambda(c).
\]

\smallskip\noindent\textit{Limsup inequality.} 
For the constant function $u \equiv c$, we have that $I_c = [0,1]$. 
The optimal parameter $\overline{t}$ that minimizes $\gamma_\lambda(t)$ over $I_c$ imposes the optimal shift to be $\overline{z} = \int_\Omega u(x) \,\mathrm{d}x - \overline{t}$, see~\eqref{la8}. 
Minimizing $\gamma_\lambda(t)$ in~\eqref{gamma_explicit} over $[0,1]$ yields that the minimizer is $\overline{t} = 1/2$; consequently the optimal shift is $\overline{z} = c - 1/2$.

Recalling the definition of $\varphi_t$ as the minimizer of the problem defining $\gamma_\lambda(t)$ (see~\eqref{gamma_t} and Proposition~\ref{lemma3.3}), we construct the recovery sequence as
\[
    u_\varepsilon^*(x) \coloneqq \overline{z} + \varphi_{\overline{t}}\left(\frac{x}{\varepsilon}\right) = c - \frac{1}{2} + \varphi_{1/2}\left(\frac{x}{\varepsilon}\right),
\]
which weakly converges in $L^1(\Omega)$ to $c - 1/2 + \int_0^1\varphi_{1/2}(\sigma) \,\mathrm{d}\sigma = c$. Plugging~$u_\varepsilon^*$ into~$F_\varepsilon$ and passing to the limit as $\varepsilon\to0$ yields precisely the integral defining $F^\lambda(\varphi_{1/2})$. Since~$\varphi_{1/2}$ is the optimal profile for $t=1/2$, this integral evaluates exactly to the cell-problem minimum $\gamma_\lambda(1/2)$. Furthermore, since $t=1/2$ is the global minimizer of $\gamma_\lambda(t)$ over the interval $[0,1]$, we obtain that $F^\lambda(\varphi_{1/2})=\overline{F}^\lambda(c)$. 
Thus, we have proved that 
\begin{equation*}
    \begin{split}
    \lim_{\varepsilon \to 0} F^\lambda_\varepsilon(u_\varepsilon^*) = \overline{F}^\lambda(c)= F^\lambda(\varphi_{1/2})=\gamma_\lambda(1/2) = \frac{\bigl(1 - (1-\lambda)^2\bigr)\alpha + (1-\lambda)^2\beta}{2}.
    \end{split}
\end{equation*}
\end{proof}

\begin{remark}
    It is worth noting how the macroscopic energy forces the interactions into the zero-cost wells of the potential $f$, rejecting a flat microscopic profile (which would cost the full average weight $\overline{a}_\lambda$, strictly greater than the expression in \eqref{lim1}) in favor of a highly oscillating recovery sequence to achieve the constant macroscopic state $u\equiv c$.  Therefore, taking the limit to the homogeneous cases, if $\lambda \to 1$ ($a(t)\equiv\alpha$), the high oscillations of the optimal profile between $c-\frac{1}{2}$ and $c + \frac{1}{2}$ halve the macroscopic energy: $\overline{F}^1(c) = \frac{\alpha}{2}$. Similarly, $\overline{F}^0(c) = \frac{\beta}{2}$ for $\lambda \to 0$. 
\end{remark}

\section{Consequences on Integral Representation}\label{sec4}
The explicit evaluation of the $\Gamma$-limit on optimal microstructures allows us to deduce a strong property of the homogenized macroscopic energy: the loss of the classical double-integral form.
To prove this, the following lemma regarding the strong convergence of characteristic functions will be useful.
\begin{lemma}\label{strong}
    Let $\{A_n\}$ be a sequence of measurable subsets of $\Omega$ and let $A \subset \Omega$. If the sequence of characteristic functions $\{\chi_{A_n}\}$ weakly converges to the characteristic function $\chi_A$ in $L^1(\Omega)$ as $n \to \infty$, then the convergence is indeed strong in $L^1(\Omega)$.
\end{lemma}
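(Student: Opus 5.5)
The idea is to use the identity $\chi^2=\chi$ for characteristic functions, which turns the $L^2$ norm of the difference into a quantity that depends only linearly on $\chi_{A_n}$. Weak convergence is enough to pass to the limit in a linear quantity.

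Since $\chi_{A_n}$ and $\chi_A$ take values in $\{0,1\}$, the difference $\chi_{A_n}-\chi_A$ takes values in $\{-1,0,1\}$. Hence
\[
|\chi_{A_n}(x)-\chi_A(x)| = \bigl(\chi_{A_n}(x)-\chi_A(x)\bigr)^2 = \chi_{A_n}(x) - 2\chi_{A_n}(x)\chi_A(x) + \chi_A(x)
\]
for almost every $x\in\Omega$. Integrating over $\Omega$ gives
\[
\|\chi_{A_n}-\chi_A\|_{L^1(\Omega)} = \int_\Omega \chi_{A_n}\,\mathrm{d}x - 2\int_\Omega \chi_{A_n}\chi_A\,\mathrm{d}x + \int_\Omega \chi_A\,\mathrm{d}x .
\]

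Each term on the right-hand side is a linear functional of $\chi_{A_n}$ tested against a bounded function. The first is tested against the constant $1$, and the second against $\chi_A$. Both test functions belong to $L^\infty(\Omega)=(L^1(\Omega))'$, which is legitimate because $\Omega$ has finite measure. By the weak convergence $\chi_{A_n}\rightharpoonup\chi_A$ in $L^1(\Omega)$, the first integral tends to $\int_\Omega\chi_A\,\mathrm{d}x=|A|$. The second tends to $\int_\Omega\chi_A^2\,\mathrm{d}x=|A|$. The right-hand side therefore converges to $|A|-2|A|+|A|=0$, which is exactly the strong convergence in $L^1(\Omega)$.

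There is no genuine obstacle here. The only point requiring care is that both $\chi_A$ and the constant function $1$ are admissible test functions for weak $L^1$ convergence, and this holds since they lie in $L^\infty(\Omega)$ and $\Omega$ is bounded. The key structural fact is that the limit is itself a characteristic function, so that $\chi_A^2=\chi_A$. Without this, the quadratic term could not be identified with the linear one, and indeed weak limits of oscillating characteristic functions need not converge strongly.
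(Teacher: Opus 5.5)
Your proof is correct and is essentially the paper's argument. The paper also tests the weak convergence against $v\equiv 1$ and $v=\chi_A$ to get $|A_n|\to|A|$ and $|A_n\cap A|\to|A|$, and then concludes from $\|\chi_{A_n}-\chi_A\|_{L^1(\Omega)}=|A_n|+|A|-2|A_n\cap A|$, which is exactly your pointwise identity integrated over $\Omega$.
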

\begin{proof}
The weak convergence in $L^1$ implies that for any test function $v \in L^\infty(\Omega)$, we have $\int_\Omega \chi_{A_n} v \to \int_\Omega \chi_A v$.
Choosing $v \equiv 1$, we get $\int_\Omega \chi_{A_n} \to \int_\Omega \chi_A$, which means $|A_n| \to~|A|$.
Choosing $v = \chi_A$, we get $\int_\Omega \chi_{A_n}\chi_A \to \int_\Omega \chi_A \chi_A = \int_\Omega \chi_A$, which means that $|A_n \cap~A| \to~|A|$.
We can now conclude by expanding the $L^1$ norm of the difference:
\[
\lim_{n \to \infty} \|\chi_{A_n} - \chi_A \|_{L^1(\Omega)} =  \lim_{n \to \infty} \left( |A_n| + |A| - 2|A_n \cap A| \right) = |A| + |A| - 2|A| = 0.\qedhere
\]
\end{proof}
\begin{theorem}\label{main}
    There exists no function $g$ such that the $\Gamma$-limit of $\{F^\lambda_\varepsilon\}$ can be represented as an integral functional of the form
    \begin{equation}\label{g}
        \Big(\Gamma-\lim_{\e \to 0}F^\lambda_\e\Big)(u) = \int_{\Omega \times \Omega}g(u(x)-u(y)) \,\mathrm{d}x\mathrm{d}y,
    \end{equation}
    for every $u \in L^1(\Omega)$.
\end{theorem}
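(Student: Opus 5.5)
We argue by contradiction: suppose some $g$ satisfies \eqref{g} for every $u\in L^1(\Omega)$. We evaluate both sides on well-chosen test functions and reach incompatible constraints on $g$, using only the values $g(0)$, $g(1)$ and $g(-1)$.

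\textbf{Constants.} For $u\equiv c$ the right-hand side of \eqref{g} is $g(0)|\Omega|^2=g(0)$. By Theorem~\ref{thm3.2} we therefore get
\[
g(0)=\frac{\bigl(1-(1-\lambda)^2\bigr)\alpha+(1-\lambda)^2\beta}{2},
\]
which is strictly smaller than $\overline{a}_\lambda$.

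\textbf{Two-valued functions.} Next consider $u=\chi_A$ with $|A|=s\in(0,1)$. The right-hand side of \eqref{g} equals
\[
g(0)\bigl(s^2+(1-s)^2\bigr)+\bigl(g(1)+g(-1)\bigr)s(1-s).
\]
We now compute the $\Gamma$-limit at $\chi_A$ directly.

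For the upper bound, take the constant recovery sequence $u_\varepsilon=\chi_A$. It has $z_\varepsilon=0$ and $\chi_\varepsilon=\chi_A$, which does not oscillate. Its two-scale limit is $\phi(x,\sigma)=\chi_A(x)$, independent of $\sigma$, so $\varphi\equiv s$ and Proposition~\ref{hom} gives the value $F^\lambda(s)=\overline a_\lambda\bigl(s^2+(1-s)^2\bigr)$.

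For the lower bound, take any $u_\varepsilon\rightharpoonup\chi_A$ with bounded energy. By Lemma~\ref{lemma_structure} we have $z\le 0$ and $z+1\ge1$, hence $z=0$. Then $u_\varepsilon-z_\varepsilon=\chi_\varepsilon$, and since $z_\varepsilon\to0$ we get $\chi_\varepsilon\rightharpoonup\chi_A$. By Lemma~\ref{strong} the convergence is strong. Strong convergence forces the two-scale limit to be $\phi(x,\sigma)=\chi_A(x)$, since strongly convergent sequences two-scale converge to their limit, and so $\varphi\equiv s$. Proposition~\ref{hom} then gives $\lim F^\lambda_\varepsilon(u_\varepsilon)=\overline a_\lambda\bigl(s^2+(1-s)^2\bigr)$.

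Combining the two bounds, the $\Gamma$-limit at $\chi_A$ equals $\overline a_\lambda\bigl(s^2+(1-s)^2\bigr)$.

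\textbf{Contradiction.} Equating the two expressions for every $s\in(0,1)$ gives
\[
(g(0)-\overline a_\lambda)\bigl(s^2+(1-s)^2\bigr)+\bigl(g(1)+g(-1)\bigr)s(1-s)=0 .
\]
The functions $s^2+(1-s)^2$ and $s(1-s)$ are linearly independent on $(0,1)$: letting $s\to0$ gives the first coefficient, and then any other $s$ gives the second. Hence $g(0)=\overline a_\lambda$ and $g(1)+g(-1)=0$. This contradicts $g(0)<\overline a_\lambda$, where the strict inequality holds because $\beta>\alpha$ and $(1-\lambda)^2/2<1-\lambda$, $\bigl(1-(1-\lambda)^2\bigr)/2<\lambda$ cannot both fail. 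Concretely, $\overline a_\lambda-g(0)=\tfrac12\bigl[(\lambda^2)\alpha+(1-\lambda^2)\beta\bigr]>0$ after simplification. A single value of $s$, for instance $s\to0$, already suffices, so we may even avoid the independence argument.

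\textbf{Main obstacle.} The delicate step is the lower bound at $\chi_A$. We must justify that strong $L^1$ (hence $L^2$, by boundedness) convergence of $\chi_\varepsilon$ identifies $\phi$ as $\sigma$-independent, and we must handle the small shift $z_\varepsilon\to0$ before invoking Lemma~\ref{strong}. This is done by applying the lemma to $\chi_{\varepsilon}$ directly after noting $\chi_\varepsilon=u_\varepsilon-z_\varepsilon\rightharpoonup\chi_A$.
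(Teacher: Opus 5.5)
Your proof is correct and follows the paper's route: you get $g(0)$ from constants via Theorem~\ref{thm3.2}, the value $\overline a_\lambda\bigl(s^2+(1-s)^2\bigr)$ at two-valued step functions via Lemma~\ref{lemma_structure} and Lemma~\ref{strong}, and a contradiction from the $s$-dependence. Your $g(1)+g(-1)$ bookkeeping is more careful than the paper's $2s(1-s)g(1)$, which tacitly assumes $g$ is even. One correction: your closing aside is wrong, because a single $s\in(0,1)$ only determines $g(1)+g(-1)$ and gives no contradiction. You need two values of $s$ with different ratios $\bigl(s^2+(1-s)^2\bigr)/\bigl(s(1-s)\bigr)$, or the limit $s\to0$ together with finiteness of $g(1)+g(-1)$, which is exactly the independence argument you already gave.
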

\begin{proof}
    We argue by contradiction, similarly to \cite{Braides24}, and suppose that such a representation exists. 
    We already explicitly computed the exact $\Gamma$-limit for constant functions in Theorem \ref{thm3.2}. Now, let $s \in (0,1)$ and define the target step function $u_s$ as
\[
    u_s(x) \coloneqq \begin{cases}
    1 & \text{if $x\in(0, s]$,} \\
    0 & \text{if $x \in (s,1)$.}
    \end{cases}
\]
We claim that for this specific profile, the $\Gamma$-limit evaluates to
\begin{equation}\label{lim2}
    \Big(\Gamma - \lim_{\e \to 0} F^\lambda_\e\Big)(u_s) = \overline{a}_\lambda \int_{\Omega \times \Omega} \Big[u_s(x) u_s(y) + (1-u_s(x))(1-u_s(y))\Big] \,\mathrm{d}x\mathrm{d}y \eqqcolon T^\lambda(u_s). 
\end{equation}
The value $T^\lambda(u_s)$ can be computed explicitly, noting that the integrand is $1$ when $x$ and~$y$ are on the same side of the jump $s$, and $0$ otherwise, yielding $T^\lambda(u_s) = \overline{a}_\lambda\bigl(s^2+(1-s)^2\bigr)$.

\smallskip\noindent\textit{Liminf inequality.} 
Let $\{u_\varepsilon\}$ be an arbitrary sequence weakly converging to $u_s$ in $L^1(\Omega)$. 
As a consequence of Lemma~\ref{lemma_structure}, see~\eqref{chie}, we can write $u_\varepsilon = z_\varepsilon + \chi_\varepsilon$\,. Since $u_s \in \{0,1\}$ is itself a characteristic function, the weak convergence forces $z_\varepsilon \to 0$ and $\chi_\varepsilon \rightharpoonup u_s$ in~$L^1(\Omega)$. 
By Lemma \ref{strong}, the convergence of $\{\chi_\varepsilon\}$ to $u_s$ is indeed strong in $L^1(\Omega)$. 
This allows us to pass to the limit in the term $F^\lambda_\varepsilon(u_\varepsilon)$, decoupling the strong convergence of $\chi_\varepsilon(x)\chi_\varepsilon(y)$ from the weak convergence of $a_\lambda \left(\frac{x-y}{\varepsilon}\right) \rightharpoonup \overline{a}_\lambda$, achieving the expression in~\eqref{lim2}. 

\smallskip\noindent\textit{Limsup inequality.} 
We choose the constant recovery sequence $u_\varepsilon^* \equiv u_s$. Applying the Riemann--Lebesgue Lemma to the oscillating weight $a_\lambda\left(\frac{x-y}{\varepsilon}\right)$, we immediately recover the explicit value of $T^\lambda(u_s)$.

We now match these two exact evaluations with the hypothetical formula \eqref{g}, yielding a counterexample analogous to \cite{Braides24}. For the constant function $u \equiv c$, since $u(x) - u(y) = 0$, formula \eqref{g} combined with Theorem \ref{thm3.2} implies
\[
       g(0) = \int_{\Omega \times \Omega} g(0) \,\mathrm{d}x\mathrm{d}y = \overline{F}^\lambda(c) = \frac{\bigl(1 - (1-\lambda)^2\bigr)\alpha + (1-\lambda)^2\beta}{2}.
\]
Now, evaluating \eqref{g} on the step function $u_s$ and equating it to \eqref{lim2}, we obtain
\[
    T^\lambda(u_s) = \int_{\Omega \times \Omega} g(u_s(x)-u_s(y)) \,\mathrm{d}x\mathrm{d}y = \bigl(s^2 + (1-s)^2\bigr)g(0) + 2s(1-s)g(1).
\]
By substituting the known values for $T^\lambda(u_s)$ and $g(0)$, we obtain
\[
    \overline{a}_\lambda\bigl(s^2 + (1-s)^2\bigr) = \bigl(s^2 + (1-s)^2\bigr)\frac{\bigl(1 - (1-\lambda)^2\bigr)\alpha + (1-\lambda)^2\beta}{2} + 2s(1-s)g(1),
\]
and solving for $g(1)$ gives
\[
    g(1) = \frac{\left(s^2 + (1-s)^2\right)}{2s(1-s)}\frac{\lambda^2\alpha + (1-\lambda^2)\beta}{2}.
\]
Consequently, the value of $g(1)$ depends explicitly on the arbitrary jump point $s \in (0,1)$, which is a contradiction.
\end{proof}

\begin{remark}
     Our main results rely on a function $f$ that takes the value $+\infty$ almost everywhere. However, as pointed out in \cite{Braides24}, an extension to finite functions occurs by considering, for a fixed constant $M > 0$, the everywhere finite integrand
    \[
    f_M(z) = \begin{cases}
    0 & \text{if } z \in \{-1, 1\} \\
    1 & \text{if } z = 0 \\
    M & \text{otherwise,}
    \end{cases}
    \]
    which is such that $f_M \nearrow f$. By choosing a sufficiently large $M$, one can easily show that any deviation from increments in $\{-1, 0, 1\}$ becomes strictly sub-optimal. Consequently, the optimal profiles for $f_M$ rigidly coincide with those of the infinite function, yielding the same macroscopic energy on our target functions.
\end{remark}

\section*{Acknowledgments}
The author thanks Andrea Braides, Valeria Chiadò Piat and  Marco Morandotti for useful discussions.
    
\bibliographystyle{plain}
\bibliography{biblio}
\end{document}